\newcounter{thmcounter}
\newtheorem{theorem}[thmcounter]{Theorem}
\newtheorem{proposition}[thmcounter]{Proposition}
\newtheorem{lemma}[thmcounter]{Lemma}
\theoremstyle{definition}
\newcommand\id{\mathrm{id}}
\newcommand\Int{\mathrm{Int}}
\newcommand\DiffM{\mathcal{D}(M)}
\newcommand\DiffIdM{\mathcal{D}_{\id}(M)}
\newcommand\Stabf{\mathcal{S}(f)}
\newcommand\Stabfpr{\mathcal{S}'(f)}
\newcommand\StabIdf{\mathcal{S}_{\id}(f)}
\newcommand\Orbf{\mathcal{O}(f)}
\newcommand\Orbff{\mathcal{O}_{f}(f)}
\newcommand\RRR{\mathbb{R}}
\newcommand\ZZZ{\mathbb{Z}}
\newcommand\Torus{T^2} %{\mathrm{T}^2}
\newcommand\Sphere{S^2} %{\mathrm{S}^2}
\newcommand\Graphf{\Gamma(f)}
\newcommand\Aut{\mathrm{Aut}}
\newcommand\st{\mathrm{star}}
\newcommand\DiffT{\mathcal{D}(\Torus)}
\newcommand\DiffIdT{\mathcal{D}_{\id}(\Torus)}
\author{Sergiy Maksymenko, Bogdan Feshchenko}
\title{Homotopy properties of spaces of smooth functions on $2$-torus}
\address{Topology dept. Institute of mathematics of NAS of Ukraine, Tereshchenkivs'ka str. 3, Kyiv, 01601, Ukraine}
\email{maks@imath.kiev.ua, feshchenkobogdan@imath.kiev.ua}
\subjclass[2000]{57S05, 57R45, 37C05}
\keywords{Diffeomorphisms, smooth function, homotopy type}
\begin{document}
\begin{abstract}
Let $f:\Torus\to\RRR$ be a Morse function on a 2-torus, $\Stabf$ and $\Orbf$ be its stabilizer and orbit with respect to the right action of the group $\DiffT$ of diffeomorphisms of $\Torus$, $\DiffIdT$ be the identity path component of $\DiffT$,
and $\Stabfpr = \Stabf \cap \DiffIdT$.
We give sufficient conditions under which
\[ \pi_1\Orbf \ \cong \ \pi_1\DiffIdT \times \pi_0 \Stabfpr \ \equiv \ \ZZZ^2 \times \pi_0 \Stabfpr.\]
In fact this result holds for a larger class of smooth functions $f:T^2\to\mathbb{R}$ having the following property: for every
critical point $z$ of $f$ the germ of $f$ at $z$ is smothly equivalent to a homogeneous polynomial $\mathbb{R}^2\to \mathbb{R}$
without multiple factors.
\end{abstract}

\maketitle

\section{Introduction}
Let $M$ be a smooth closed oriented surface and $\DiffM$ be its groups diffeomorphisms acting from the right of the space $C^{\infty}(M,\RRR)$ of smooth functions by the following rule:
\begin{equation}\label{main-act}
 (f,h)\ \longmapsto \ \ f\circ h: M \xrightarrow{~h~} M \xrightarrow{~f~} \RRR,
\end{equation}
for $f\in C^{\infty}(M,\RRR)$ and $h \in \DiffM$.
Denote by
\begin{align*}
\Stabf &= \{f\in\DiffM \mid f\circ h = f\}, &
\Orbf &= \{f\circ h \mid h\in\DiffM\}
\end{align*}
respectively the stabilizer and the orbit of $f \in C^{\infty}(M,\RRR)$ under the action~\eqref{main-act}.
Endow the spaces $\DiffM$ and $C^{\infty}(M,\RRR)$ with strong Whitney $C^\infty$-topologies.
These topologies induce certain topologies on $\Stabf$ and $\Orbf$.
Let also $\DiffIdM$ and $\StabIdf$ be the path components of the identity map $\id_{M}$ of the groups $\DiffM$ and $\Stabf$, and let $\Orbff$ be the path component of $f$ in its orbit $\Orbf$.

Denote by $\mathrm{Morse}(M,\RRR)\subset C^{\infty}(M,\RRR)$ the subset consisting of all Morse functions, that is the functions having only non-degenerate critical points.
It is well known that $\mathrm{Morse}(M,\RRR)$ is open and everywhere dense in $C^{\infty}(M,\RRR)$, e.g.~\cite{Milnor:MorseTheory}.
Path components of $\mathrm{Morse}(M,\RRR)$ are computed in~\cite{Sharko:ProcIM:ENG:1998, Kudryavtseva:MatSb:1999, Maksymenko:CMH:2005}, and its homotopy type is described in~\cite{Kudryavtseva:MatSb:2013}.

Recall that the germs of smooth functions $f, g: (\RRR^2,0)\to (\RRR,0)$ are \emph{smoothly equivalent} at point $0\in\RRR^2$ if there exist germs of diffeomorphisms $h:(\RRR^2,0)\to(\RRR^2,0)$ and $\phi:(\RRR,0)\to (\RRR^2,0)$ such that $\phi\circ g = f\circ h$.

Let $\mathcal{F}(M,\RRR)$ be the subset of $C^{\infty}(M,\RRR)$ consisting of functions $f$ having the following property:

\noindent
{\bf Property} (L).
{\it For each critical point $z$ of $f$ its germ at $z$ is smoothly equivalent to some {\bfseries homogeneous polynomial $\RRR^2\to \RRR$ without multiple factors}.}

Notice that if $z$ is a nondegenerate critical point of a smooth function $f:M\to \RRR$ then the germ of $f$ at $z$ is equivalent to a homogeneous polynomial $\pm x^2 \pm y^2$ which obviously has no multiple factors.
Hence we have an inclusion
$$
\mathrm{Morse}(M,\RRR) \ \subset \ \mathcal{F}(M,\RRR).
$$

It is known, \cite{Poenaru:PMIHES:1970, Sergeraert:ASENS:1972}, see also~\cite[\S11]{Maksymenko:AGAG:2006}, that for functions from $\mathcal{F}(M,\RRR)$ the natural map 
\begin{equation}\label{equ:fibration_Dm_Of}
 p:\DiffM \longrightarrow \Orbf, \qquad p(h) = f\circ h,
\end{equation}
is a Serre fibration.

It is proved in~\cite{Maksymenko:AGAG:2006, Maksymenko:ProcIM:ENG:2010} that $\StabIdf$ is \emph{contractible} for every $f\in\mathcal{F}(M,\RRR)$ except for hte case when $f:S^2 \to \RRR$ is a Morse function having exactly two critical point one of which is a maximum and another one is a minimum.
In that case $\StabIdf$ is \emph{homotopy equivalent to the circle $S^1$}.

So assume that $\StabIdf$ is contractible.
Then it follows from the description of the homotopy type of groups $\DiffIdM$, see~\cite{EarleEells:DG:1970, EarleSchatz:DG:1970, Gramain:ASENS:1973}, exact sequence of homotopy groups of fibration~\eqref{equ:fibration_Dm_Of}, and from results of~\cite{Maksymenko:ProcIM:ENG:2010, Maksymenko:UMZ:ENG:2012} that $\pi_i\Orbff=\pi_i M$ for $i\geq3$, $\pi_2\Orbff=0$, and for $\pi_1\Orbff$ we have a short exact sequence
\begin{equation}\label{equ:exact_sequence}
1 \longrightarrow \pi_1\DiffIdM \xrightarrow{~p_1~} \pi_1\Orbff 
\xrightarrow{~\partial_1~} \pi_0 \Stabfpr \longrightarrow 1
\end{equation}
where $\Stabfpr = \Stabf\cap\DiffIdM$.

Notice that if $M$ is distinct from the $2$-sphere $\Sphere$ and $2$-torus $\Torus$, then the group $\DiffIdM$ is contractible, and so we get an isomorphism $\pi_1\Orbff \cong \pi_0 \Stabfpr$.

However if $M=\Sphere$ or $\Torus$ then the structure of the sequence~\eqref{equ:exact_sequence} is not understood.

The aim of the present note is to give a sufficient conditions when the sequence~\eqref{equ:exact_sequence} splits for the case $M=\Torus$, see Theorem~\ref{th:Graph_tree_Glocv_1} below.

\subsection{Graph of a smooth function}
Let $f\in\mathcal{F}(M,\RRR)$, $t\in \RRR$, and $\omega$ be the connected component of the level set $f^{-1}(t)$.
We will say that $\omega$ is \emph{critical} if it contains a critical point $f$.
Otherwise $\omega$ will be called \emph{regular}.

Consider the partition of $M$ into the connected components of level sets of $f$.
Let also $\Graphf$ be the corresponding factor space.
It is well known that $\Graphf$ has a structure of a one-dimensional CW-complex and often called the \emph{Kronrod-Reeb} graph or simply the \emph{graph} of $f$.
The vertices of $\Graphf$ are critical components of level sets of $f$, while open edges of $\Graphf$ correspond to connected components of the complement of $M$ to the union of all critical components of level sets of $f$.

Notice that $f$ can be represented as a composition
\[
 f = \phi \circ p_f: M \xrightarrow{~p_f~} \Graphf
\xrightarrow{~\phi~} \RRR,
\]
where $p_f$ is the factor map and $\phi$ is the function of $\Graphf$ induced by $f$.

\subsection{Action of $\Stabf$ on $\Graphf$}
Let $h \in \Stabf$, that is $f \circ h = f$, and so $h(f^{-1}(t))=f^{-1}(t)$ for all $t\in\RRR$.
Therefore $h$ interchanges connected components of level sets of $f$, i.e. the points of $\Graphf$.
It is easy to check that $h$ induces a certain homeomorphism $\rho(h)$ of $\Graphf$ such that the following diagram is commutative:
\begin{equation}\label{rho_h_preserves_pf}
\begin{CD}
M @>{p_f}>> \Graphf @>{\phi}>> \RRR  \\
@V{h}VV @V{\rho(h)}VV @|  \\
M @>{p_f}>> \Graphf @>{\phi}>> \RRR
\end{CD}
\end{equation}
and that the correspondence $h \mapsto \rho(h)$ is a homomorphism $\rho:\Stabf \to \Aut(\Graphf)$ into the group of all automorphisms of $\Graphf$.

Consider the group $\Stabfpr = \Stabf\cap\DiffT$ from the right part of sequence~\eqref{equ:exact_sequence}, and let
\[
 G := \rho(\Stabfpr)
\]
be its image in $\Aut(\Graphf)$.
Thus $G$ is the group of automorphisms of $\Graphf$ induced by isotopic to $\id_{M}$ diffeomorphisms from $h\in\Stabf$.
Let us emphasize that a particular isotopy between $h$ and $\id_{\Torus}$ does not necessarily consist of diffeomorphisms belonging to $\Stabf$.

Also notice that it follows from~\eqref{rho_h_preserves_pf} and the observation that the function $\phi:\Graphf\to\RRR$ is monotone of edges of $\Graphf$ that the group $G$ is finite.

Let $v$ be a vertex of $\Graphf$, and
\[ G_{v} = \{ g\in G \mid g(v)=v \}\]
be the stabilizer of $v$ with respect to $G$.
By a \emph{star} $\st(v)$ of $v$ we will mean an arbitrary connected closed $G_{v}$-invariant neighbourhood of $v$ in $\Graphf$ containing no other vertices of $\Graphf$.

Let us fix any star $\st(v)$ of $v$ and denote by 
\[
G^{loc}_{v} = \{ g|_{\st(t)} \mid g \in G_v \}
\]
the subgroups of $\Aut(\st(v))$ consisting of restrictions of elements from $G_{v}$ onto $\st(v)$.
We will call $G^{loc}_{v}$ the \emph{local stabilizer} of the vertex $v$ with respect the group $G$.
Evidently $G^{loc}_{v}$ does not depend on a particular choice of a star $\st(v)$.

The aim of this note is to prove the following two statements.
\begin{proposition}\label{pr:unique_vertex}
Let $f\in\mathcal{F}(\Torus,\RRR)$ be such that its graph $\Graphf$ is a tree.
Then there exists a unique vertex $v$ of $\Graphf$ such that the complement $\Torus\setminus p_f^{-1}(v)$ is a disjoint union of open $2$-disks.
\end{proposition}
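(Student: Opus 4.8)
The plan is to work with the Reeb quotient map $p_f:\Torus\to\Graphf$. I will use the standard facts that $p_f$ is a monotone quotient map (its point-preimages are connected, so $p_f^{-1}(C)$ is connected for every connected $C\subseteq\Graphf$), and that over the interior of each edge $e$ the map $p_f$ restricts to a trivial circle bundle, so the preimage of an interior point of $e$ is a single smoothly embedded circle. Since $f\in\mathcal F(\Torus,\RRR)$ has isolated, hence finitely many, critical points, $\Graphf$ is a finite graph, and by hypothesis a tree. For any vertex $v$ the connected components of $\Torus\setminus p_f^{-1}(v)$ are exactly the sets $p_f^{-1}(C)$ with $C$ a component of $\Graphf\setminus\{v\}$, and since $\Graphf$ is a tree there is one such $C$ per edge of $\Graphf$ at $v$. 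Finally, every regular level circle $\omega=p_f^{-1}(x)$ (with $x$ interior to an edge) bounds a $2$-disk in $\Torus$: indeed $\Graphf\setminus\{x\}$, hence $\Torus\setminus\omega$, has exactly two components, so $\omega$ is separating, hence null-homologous, hence null-homotopic (as $\pi_1\Torus\cong H_1(\Torus)$), hence bounds a disk; consequently $\Torus\setminus\omega$ is the disjoint union of an open $2$-disk and an open once-punctured torus, and exactly one of the two components of $\Graphf\setminus\{x\}$ — to be called the \emph{disk side} of $x$ — has disk preimage.

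The main step, and the one I expect to be the chief obstacle, is to show the disk side is the same for all interior points of a fixed edge $e=[a,b]$. Take interior points $x,x'$ of $e$ with $x'$ between $x$ and $b$, and suppose the disk side of $x$ is the component $A$ of $\Graphf\setminus\{x\}$ containing $a$. Then the component of $\Graphf\setminus\{x'\}$ containing $a$ equals $A$ together with the sub-arc of $e$ from $x$ (inclusive) to $x'$ (exclusive); since the closure $\overline{p_f^{-1}(A)}$ in $\Torus$ is a closed $2$-disk with boundary circle $p_f^{-1}(x)$ and the preimage of that sub-arc is a collar $S^1\times[0,1)$ glued along $p_f^{-1}(x)$, the preimage of that component of $\Graphf\setminus\{x'\}$ is again an open $2$-disk. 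As $\Torus\ne\Sphere$, at most one side of $p_f^{-1}(x')$ can be a disk, so this side is the disk side of $x'$. An analogous argument, now gluing a collar to an open once-punctured torus, shows the disk side also cannot switch when $x'$ moves to the $a$-side of $x$; hence the disk side is constant along $e$. Therefore each edge $e$ has a well-defined \emph{disk end} — the endpoint lying on the disk side of the interior points of $e$ — and I orient $e$ by the arrow pointing away from its disk end.

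Next, since $\Graphf$ is a finite oriented tree, a walk starting at any vertex and following outgoing edges never revisits a vertex and so must stop at a vertex $v$ with no outgoing edge; that is, every edge at $v$ points towards $v$. I claim $v$ satisfies the statement. Fix an edge $e$ at $v$, let $C$ be the component of $\Graphf\setminus\{v\}$ across $e$, and pick an interior point $x$ of $e$. Because $e$ points towards $v$, the disk side of $x$ is the component of $\Graphf\setminus\{x\}$ not containing $v$, so its preimage is an open $2$-disk whose closure (in $\Torus$) is a closed $2$-disk with boundary $p_f^{-1}(x)$. Since $C$ is the disjoint union of this disk side with the sub-arc of $e$ from $x$ (inclusive) to $v$ (exclusive), and the preimage of that sub-arc is a collar glued along $p_f^{-1}(x)$, the set $p_f^{-1}(C)$ is an open $2$-disk. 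Running this over all edges at $v$ shows $\Torus\setminus p_f^{-1}(v)$ is a disjoint union of open $2$-disks, which proves existence.

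For uniqueness, suppose $v\ne w$ both have the stated property. Let $U$ be the component of $\Graphf\setminus\{v\}$ containing $w$ and $U'$ the component of $\Graphf\setminus\{w\}$ containing $v$. Since $\Graphf$ is a tree one checks directly that $U\cup U'=\Graphf$, while $U\cap U'$ — the set of points whose nearest-point projection onto the arc joining $v$ and $w$ lies in the interior of that arc — is non-empty and connected. Then $p_f^{-1}(U)$ and $p_f^{-1}(U')$ are open $2$-disks covering $\Torus$ whose intersection $p_f^{-1}(U\cap U')$ is path-connected, so van Kampen's theorem gives $\pi_1\Torus=1$, contradicting $\pi_1\Torus\cong\ZZZ^2$. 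Hence the vertex $v$ is unique.
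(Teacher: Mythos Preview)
Your proof is correct and shares the paper's core idea: orient each edge of $\Graphf$ toward the non-disk side, identify the desired vertex with a sink of the resulting directed tree, and obtain existence by walking along outgoing edges until the walk terminates. You are in fact more careful than the paper on one point: you explicitly verify that the ``disk side'' is constant along each edge, so the orientation is well-defined, whereas the paper fixes one interior point per edge and leaves this invariance implicit.

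Where you genuinely diverge is in the uniqueness argument. The paper proves a combinatorial lemma (their Lemma~2): no vertex can have two outgoing edges, because that would embed a once-punctured torus into a $2$-disk. From this it follows that a directed tree with at most one outgoing edge at each vertex has at most one sink. You bypass this entirely: assuming two special vertices $v\neq w$, you cover $\Torus$ by the two open $2$-disks $p_f^{-1}(U)$ and $p_f^{-1}(U')$, check that their intersection is path-connected (via the gate map onto the arc $[v,w]$), and invoke van~Kampen to get $\pi_1\Torus=1$, a contradiction. Your route is more directly topological and avoids the auxiliary Lemma~2; the paper's route yields the additional structural fact about outgoing edges, which is of independent interest but not needed for the proposition itself.
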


\begin{theorem}\label{th:Graph_tree_Glocv_1}
Let $f\in\mathcal{F}(\Torus,\RRR)$ be such that its graph $\Graphf$ is a tree, and $v$ be the vertex of $\Graphf$ described in Proposition~{\rm\ref{pr:unique_vertex}}.
Suppose that the local stabilizer $G_v^{loc}$ of $v$ is a trivial group.
Then the sequence~\eqref{equ:exact_sequence} splits, and so
$$
\pi_1\Orbff \ \ \cong \ \ \pi_1\DiffIdT \times \pi_0\Stabfpr \ \ \cong \ \ \ZZZ^2 \times \pi_0\Stabfpr.
$$
\end{theorem}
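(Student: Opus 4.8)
The plan is to split \eqref{equ:exact_sequence} by producing a \emph{retraction} $r\colon\pi_1\Orbff\to\pi_1\DiffIdT$ of the monomorphism $p_1$, i.e. a homomorphism with $r\circ p_1=\id_{\pi_1\DiffIdT}$. Granting this, and using that $p_1(\pi_1\DiffIdT)$ is central in $\pi_1\Orbff$ --- a loop in $\DiffIdT$ based at $\id_{\Torus}$ and a loop in $\Orbff$ based at $f$ combine, via the composition action $u\mapsto u\circ h$ of $\DiffIdT$ on $\Orbff$, into a based map $\Torus\to\Orbff$ witnessing commutativity of their classes, so that $p_1(\pi_1\DiffIdT)$ centralises all of $\pi_1\Orbff$ --- one gets $\pi_1\Orbff\cong p_1(\pi_1\DiffIdT)\times\ker r$, with $\partial_1$ restricting to an isomorphism $\ker r\xrightarrow{\ \cong\ }\pi_0\Stabfpr$. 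Since $M=\Torus$ the group $\StabIdf$ is contractible, so \eqref{equ:exact_sequence} is available, and $\pi_1\DiffIdT\cong\pi_1\Torus\cong\ZZZ^2$ because $\DiffIdT$ deformation retracts onto its subgroup of translations. This yields the desired $\pi_1\Orbff\cong\ZZZ^2\times\pi_0\Stabfpr$.

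The one nonformal ingredient that the construction of $r$ needs is a point $x_0\in\Torus$ \emph{fixed by every diffeomorphism in} $\Stabfpr$; assume it is given. For $[\gamma]\in\pi_1(\Orbff,f)$ pick a representing loop $\gamma$, lift it through the Serre fibration $p\colon\DiffIdT\to\Orbff$ (see \eqref{equ:fibration_Dm_Of}) to a path $\tilde\gamma$ with $\tilde\gamma(0)=\id_{\Torus}$; then $\tilde\gamma(1)\in\Stabfpr$, hence $\tilde\gamma(1)(x_0)=x_0$, so $t\mapsto\tilde\gamma(t)(x_0)$ is a loop in $\Torus$ at $x_0$. Set $r([\gamma]):=\mathrm{ev}_{x_0*}^{-1}\bigl[\,t\mapsto\tilde\gamma(t)(x_0)\,\bigr]\in\pi_1\DiffIdT$, where $\mathrm{ev}_{x_0}\colon\DiffIdT\to\Torus$, $h\mapsto h(x_0)$, is the evaluation; it is a $\pi_1$-isomorphism, since on the translation subgroup it is the homeomorphism $a\mapsto x_0+a$ and that subgroup is a deformation retract of $\DiffIdT$. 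This $r$ is well defined by homotopy lifting: a based homotopy of loops in $\Orbff$ lifts to a homotopy of paths in $\DiffIdT$ whose terminal endpoints trace a path inside one component of the fibre $\Stabfpr$, along which evaluation at $x_0$ is constant, so the loops in $\Torus$ are homotopic rel endpoints. It is a homomorphism: $\gamma*\gamma'$ lifts to $\tilde\gamma$ followed by $\tilde\gamma(1)\cdot\tilde\gamma'$, so $r([\gamma*\gamma'])$ is represented by $\tilde\gamma(\cdot)(x_0)$ followed by $\tilde\gamma(1)\bigl(\tilde\gamma'(\cdot)(x_0)\bigr)$, and $\tilde\gamma(1)\in\DiffIdT$ acts trivially on $\pi_1(\Torus,x_0)$. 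Finally $r\circ p_1=\id_{\pi_1\DiffIdT}$, because a loop $\lambda$ in $\DiffIdT$ at $\id_{\Torus}$ is its own $p$-lift, so $r([p\circ\lambda])=\mathrm{ev}_{x_0*}^{-1}\mathrm{ev}_{x_0*}[\lambda]=[\lambda]$.

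Everything above is formal once $x_0$ is available, so the real work --- and where all the hypotheses enter --- is the construction of $x_0$; this I expect to be the main obstacle. The intended approach: put $C:=p_f^{-1}(v)$, so $\Torus\setminus C$ is a disjoint union of open $2$-disks by Proposition~\ref{pr:unique_vertex}. Since this property characterises $v$ and every element of $G=\rho(\Stabfpr)$ is induced by a diffeomorphism of $\Torus$, each $g\in G$ must fix $v$; hence $G_v=G$, and as $G^{loc}_v$ is trivial each $g\in G$ fixes $\st(v)$ pointwise. Transported to $\Torus$, this means every $h\in\Stabfpr$ preserves $C$ and acts on a neighbourhood of $C$ without permuting local branches: at each critical point of $f$ on $C$ it preserves the cyclic order of the arcs of $C$ through that point, and it maps each level component adjacent to $C$ to itself. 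Combining this with the normal forms furnished by Property~(L) at the critical points lying on $C$, and with the hypothesis that $\Graphf$ is a tree, the goal is to show that the finite, $\Stabfpr$-invariant set of critical points of $f$ on $C$ contains one fixed by all of $\Stabfpr$ --- in fact a critical point canonically determined by $v$ --- which can be taken as $x_0$. The crux is precisely this last step: triviality of $G^{loc}_v$ has to be leveraged to forbid $\Stabfpr$ from permuting these critical points fixed-point-freely; when $G^{loc}_v$ is nontrivial such permutations do occur and no retraction $r$ exists, so the hypothesis is genuinely indispensable.
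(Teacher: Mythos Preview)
Your formal framework is correct and takes a genuinely different tack from the paper. Where you split the sequence via a \emph{retraction} $r\colon\pi_1\Orbff\to\pi_1\DiffIdT$ built from evaluation at a common fixed point $x_0$, the paper instead constructs a \emph{section} $s\colon\pi_0\Stabfpr\to\pi_1\Orbff$ of $\partial_1$. Concretely, the paper chooses a \emph{regular} curve $C=p_f^{-1}(z)$ with $z$ on an edge of $\Graphf$ adjacent to $v$; triviality of $G^{loc}_v$ gives $\rho(h)(z)=z$, hence $h(C)=C$ for every $h\in\Stabfpr$. Using structural results from \cite{Maksymenko:AGAG:2006} the paper shows each such $h$ is isotopic to $\id_{\Torus}$ through an isotopy $\{h_t\}$ preserving $C$; since $C$ bounds a $2$-disk $D$ (Proposition~\ref{pr:unique_vertex}), any two such isotopies differ by a loop in $\DiffIdT$ whose evaluation at a point of $D$ stays in $D$ and is therefore null-homotopic in $\Torus$, forcing the loop itself to be null-homotopic in $\DiffIdT$. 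The class $s(h):=[\{f\circ h_t\}]\in\pi_1\Orbff$ is then well defined and yields the section.

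The gap in your proposal is exactly the one you flag: you do not prove that a point $x_0$ fixed by all of $\Stabfpr$ exists, and your sketch (``triviality of $G^{loc}_v$ should forbid a fixed-point-free permutation of the critical points on $p_f^{-1}(v)$'') stops short of an argument. In fact such an $x_0$ \emph{does} exist, but establishing it requires precisely the external input the paper invokes: by \cite[Theorem~7.1]{Maksymenko:AGAG:2006}, an $h\in\Stabfpr$ with $\rho(h)|_{\st(v)}=\id$ preserves every edge of the critical graph $V=p_f^{-1}(v)$ together with its orientation, hence fixes every vertex of $V$; any critical point on $V$ then serves as your $x_0$. So your route is completable but not more elementary than the paper's. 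The paper's approach has the advantage of never needing a pointwise fixed point---an invariant null-homotopic simple closed curve suffices---while yours, once $x_0$ is secured, is arguably more transparent, since ``track $x_0$ through the lifted isotopy'' is a one-line definition of $r$.
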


\section{Proof of Proposition~\ref{pr:unique_vertex}}
Let $f\in\mathcal{F}(\Torus,\RRR)$ such that $\Graphf$ is a tree.
The following lemma is evident.
\begin{lemma}\label{lm:separating_curves}
Let $e$ be an open edge of the tree $\Graphf$, $z\in e$ be a point, and $C=p^{-1}_f(z)$ be the corresponding regular component of some level set of $f$, so $C$ is a simple closed curve in $\Torus$.
Then
\begin{enumerate}
\item $z$ divides $\Graphf$;
\item $C$ divides $\Torus$ and therefore only one of two connected components of $\Torus\setminus C$ is a $2$-disk.
\qed
\end{enumerate}
\end{lemma}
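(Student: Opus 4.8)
The plan is to establish the two items in order, obtaining the statement about $\Torus$ in (2) as a pull-back of the statement about $\Graphf$ in (1) along the factor map $p_f$.

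For (1), I would invoke the defining feature of a tree: it is uniquely arcwise connected and contains no cycle. Write $a,b$ for the two endpoints of the edge $e$; they are distinct, since a tree has no loop-edge. Then $e$ is the unique arc of $\Graphf$ joining $a$ to $b$, for a second such arc would together with $e$ enclose a cycle. As $z$ is an interior point of $e$, it lies on this unique arc, so every path from $a$ to $b$ meets $z$; hence $a$ and $b$ fall into different components of $\Graphf\setminus\{z\}$ and $z$ divides $\Graphf$. In fact $\Graphf\setminus\{z\}$ has exactly two components $\Gamma_1,\Gamma_2$, and each is open in $\Graphf$ by local connectedness.

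For the first half of (2), I would simply pull this splitting back. Since $C=p_f^{-1}(z)$ and $p_f$ is continuous and surjective,
\[
 \Torus\setminus C \;=\; p_f^{-1}\!\big(\Graphf\setminus\{z\}\big)
 \;=\; p_f^{-1}(\Gamma_1)\ \sqcup\ p_f^{-1}(\Gamma_2),
\]
a disjoint union of two nonempty open sets. Thus $\Torus\setminus C$ is disconnected, i.e.\ $C$ divides $\Torus$.

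It then remains to see that exactly one side is a disk. Because $\Torus$ is orientable, $C$ is two-sided, so its complement has at most two components; with the previous step this gives exactly two components $U_1,U_2$, and each closure $\overline{U_i}$ is a compact connected orientable surface with boundary the single circle $C$. Re-gluing along $C$ recovers $\Torus$, so inclusion--exclusion yields $\chi(\overline{U_1})+\chi(\overline{U_2})=\chi(\Torus)+\chi(C)=0$. Every compact connected orientable surface with one boundary circle has Euler characteristic $1-2g\le 1$, an odd integer, and the only way two such integers can sum to $0$ is $1+(-1)$; hence one closure has genus $0$ and is a $2$-disk, while the other is a one-holed torus. The single point needing any care is precisely this last passage from ``separating'' to ``bounds a disk on one side'': the Euler-characteristic count above settles it cleanly, and one could instead cite the standard fact that a separating simple closed curve on $\Torus$ is null-homotopic and so bounds a disk. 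Neither route presents a real obstacle, which matches the lemma being recorded as evident.
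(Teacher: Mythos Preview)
Your argument is correct. The paper itself offers no proof: the lemma is declared ``evident'' and the statement carries a terminal \qed, so there is no approach to compare against beyond the tacit one you have made explicit. Your write-up is a clean unpacking of exactly the standard facts the authors are relying on --- that an interior point of an edge disconnects a tree, that this disconnection pulls back along the quotient map $p_f$, and that an Euler-characteristic count (or equivalently the classification of separating curves on $\Torus$) forces one side to be a disk. One small remark: you get ``exactly two components'' via the two-sidedness of $C$; an alternative that stays closer to the Reeb-graph picture is to note that $p_f$ has connected fibres, so the preimage of each connected piece $\Gamma_i$ is already connected, giving the two components directly. Either way the argument goes through without difficulty.
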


Let $e = (u_0u_1)$ be an open edge of the tree $\Graphf$, $z\in e$, and $C = p^{-1}_f(z)$ be the same as in Lemma~\ref{lm:separating_curves}.
For $i=0,1$ denote by $T_{zu_i}$ the closure of those connected component of $\Graphf\setminus z$ which contains the point $u_i$.
Put
\[X_i = p^{-1}_f(T_{zu_i}).\] 
Then by Lemma~\ref{lm:separating_curves} exactly one of two subsurfaces either $X_0$ or $X_1$ is a $2$-disk.
Let us orient the edge $e$ from $u_0$ to $u_1$ whenever $X_0$ is a $2$-disk and from $u_1$ to $u_0$ otherwise.

Then each edge of $\Graphf$ obtains a canonical orientation and so $\Graphf$ is a \emph{directed tree}. 

\begin{lemma}\label{lemma2}
For each vertex $u$ of the directed tree $\Graphf$ there exists at most one edge going from $u$.
\end{lemma}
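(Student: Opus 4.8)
The plan is to argue by contradiction using the orientation convention together with the fact that $X_i = p_f^{-1}(T_{zu_i})$ is a $2$-disk exactly when the edge at $z$ points away from $u_i$, i.e.\ toward the ``interior'' subtree. Suppose some vertex $u$ has two distinct outgoing edges $e = (uu')$ and $e'' = (uu'')$. Pick interior points $z \in e$, $w \in e''$, with regular level curves $C = p_f^{-1}(z)$ and $C' = p_f^{-1}(w)$. By the orientation rule, the component $T_{zu}$ of $\Graphf \setminus z$ containing $u$ has the property that $X_u := p_f^{-1}(T_{zu})$ is \emph{not} a $2$-disk (since $e$ points from $u$ to $u'$, the disk side is the one containing $u'$). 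Likewise, with $T_{wu}$ the component of $\Graphf \setminus w$ containing $u$, the subsurface $p_f^{-1}(T_{wu})$ is not a $2$-disk. The key point is that since $\Graphf$ is a tree and $e, e''$ are distinct edges at the common vertex $u$, the subtree $T_{zu}$ contains the whole edge $e''$ and in particular the point $w$, so $T_{wu} \subset T_{zu}$ are nested, and dually $T_{zu'} \cap T_{wu''} = \varnothing$ are disjoint subtrees on ``opposite sides'' of $u$.

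First I would pin down the surface decomposition. Cutting $\Torus$ along $C$ yields two pieces: one is a $2$-disk $D = p_f^{-1}(T_{zu'})$ and the other is $X_u = p_f^{-1}(T_{zu})$, which is therefore a torus with one open disk removed, i.e.\ a surface of genus one with one boundary circle $C$. Now $w \in T_{zu}$, so the curve $C' = p_f^{-1}(w)$ lies in $X_u$, and cutting along $C'$ separates off the $2$-disk $p_f^{-1}(T_{wu''})$, leaving $X_u$ cut into that disk and the surface $p_f^{-1}(T_{wu} \cup \{\text{disk on the }u'\text{ side}\})$. The crucial observation is that $C$ and $C'$ are disjoint simple closed curves in $\Torus$, each of which bounds a $2$-disk on a side \emph{not} containing the other curve; concretely $C$ bounds the disk $D$ on its $u'$-side, and $C'$ bounds the disk $p_f^{-1}(T_{wu''})$ on its $u''$-side, and these two disks are disjoint because $T_{zu'}$ and $T_{wu''}$ are disjoint subtrees of the tree $\Graphf$. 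Hence $\Torus$ is the union of two disjoint disks and the connecting region $p_f^{-1}$ of the arc of $\Graphf$ between $z$ and $w$ through $u$, which is an annulus (its graph is an interval with no disk-branches split off along it after removing the two disk ends). But then $\Torus$ would be a union of two disks glued to an annulus along its two boundary circles, i.e.\ a sphere — contradicting the fact that the Euler characteristic of $\Torus$ is $0$, not $2$.

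The step I expect to be the main obstacle is making rigorous the claim that ``$C$ and $C'$ being disjoint and each bounding a disk on the far side forces $\Torus$ to be a sphere'' — one must be careful that the region $p_f^{-1}(T_{zu} \cap T_{wu})$ between the two curves is genuinely an annulus and carries no hidden handle. The clean way to see this: removing the two disjoint open disks $\Int D$ and $\Int p_f^{-1}(T_{wu''})$ from $\Torus$ leaves a compact surface $W$ with two boundary circles $C \sqcup C'$, and $W = p_f^{-1}(\Gamma')$ where $\Gamma'$ is the subtree of $\Graphf$ obtained by deleting the open ``disk-subtrees'' hanging at $u'$ and $u''$; since $\Torus$ has genus one and we have exhibited it as $W$ with two disks attached, $\chi(\Torus) = \chi(W) + 2$, giving $\chi(W) = -2$, whereas an annulus has $\chi = 0$ — and in fact any surface with $\le 2$ boundary components and $\chi = -2$ cannot be glued from disks to reconstitute $\Torus$ with the required disk pieces, but more directly, $W$ must be an annulus because $\Torus$ cut along the single separating curve $C$ gives exactly a disk and a one-holed torus, and cutting that one-holed torus along $C'$ (which separates off a disk) leaves a one-holed torus again, not an annulus; so in fact \emph{no} such $C'$ inside $X_u$ can bound a disk on its far side unless that far side is all of $X_u$ minus a collar — contradicting that $T_{wu}$ is a nontrivial subtree containing the vertex $u$ and the edge $e$. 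I would phrase the final contradiction in exactly this last form: inside the one-holed torus $X_u$, a separating simple closed curve $C'$ that cuts off a $2$-disk must have that disk disjoint from the original disk $D$, but removing both disks from $\Torus$ leaves a surface of Euler characteristic $-2$ with two boundary circles, which is impossible since such a surface would have to be a one-holed torus (one boundary) or a thrice-punctured sphere (three boundaries), neither matching two boundary circles with $\chi=-2$ coming from $\Torus$.
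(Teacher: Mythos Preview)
Your argument rests on a misreading of the orientation convention. The paper orients an edge $e=(u_0u_1)$ \emph{from $u_0$ to $u_1$ whenever $X_0=p_f^{-1}(T_{zu_0})$ is a $2$-disk}. Thus an edge points \emph{away} from the disk side, not toward it. So if $u$ has two outgoing edges $(uu')$ and $(uu'')$, then the $u$-sides $A:=p_f^{-1}(T_{zu})$ and $B:=p_f^{-1}(T_{wu})$ are the $2$-disks, and their complements $A'=p_f^{-1}(T_{zu'})$ and $B'=p_f^{-1}(T_{wu''})$ are the one-holed tori --- exactly the opposite of what you assume. The paper then simply observes that $T_{zu'}\subset T_{wu}$ (since $z$ and $w$ lie on different edges at $u$), so $A'\subset B$: a one-holed torus sitting inside a $2$-disk, which is impossible.

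Because you reversed the convention, the configuration you analyze --- two disjoint disks on the far sides of $u$ --- is not contradictory at all, and your attempt to extract a contradiction from it fails accordingly. Removing two disjoint open disks from $\Torus$ gives a surface with two boundary circles and Euler characteristic $-2$; such a surface certainly exists (a genus-one surface with two boundary components, $\chi=2-2g-b=2-2-2=-2$). The surfaces you list, the one-holed torus and the thrice-punctured sphere, both have $\chi=-1$, not $-2$, so the classification you invoke is incorrect and there is no contradiction where you claim one. Once you fix the orientation convention, the short embedding argument above (or, equivalently, noting that the two disks $A$ and $B$ would then cover $\Torus$ and exhibit it as a sphere) finishes the proof immediately.
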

\begin{proof}
Suppose that there are two edges going from $u$ and finishing at vertices $v$ and $v'$ respectively.
Choose arbitrary points $z_0\in(uv)$ and $z_1\in(uv')$ and denote
\begin{align*}
 A &= p_f^{-1}(T_{z_0 u}), &
 A' &= p_f^{-1}(T_{z_0 v_0}), &
 B &= p_f^{-1}(T_{z_1 u}), &
 B'&= p_f^{-1}(T_{z_1 v_1}),
\end{align*}
see Figure~\ref{fig:disks}.
By definition of orientation of edges, $A$ and $B$ are $2$-disks.
Moreover, since $\Torus = A \cup A' = B \cup B'$,
\begin{equation}\label{equ:A_Bpr__B_Apr}
 A' \,\subset\, B, \qquad \qquad
 B' \,\subset\, A,
\end{equation}
and the intersections $A\cap A' = p_f^{-1}(z_0)$ and $B\cap B' = p_f^{-1}(z_1)$ are simple closed curves, it follows that each of subsurfaces $A'$ and $B'$ is a torus with one hole.
But then neither $A'$ nor $B'$ can be embedded into a $2$-disk which contradicts to the inclusions~\eqref{equ:A_Bpr__B_Apr}.
Hence for each vertex $u$ of the directed tree $\Graphf$ there exists at most one edge going from $u$.
\end{proof}

\begin{figure}[ht]
\centerline{\includegraphics[width=7cm]{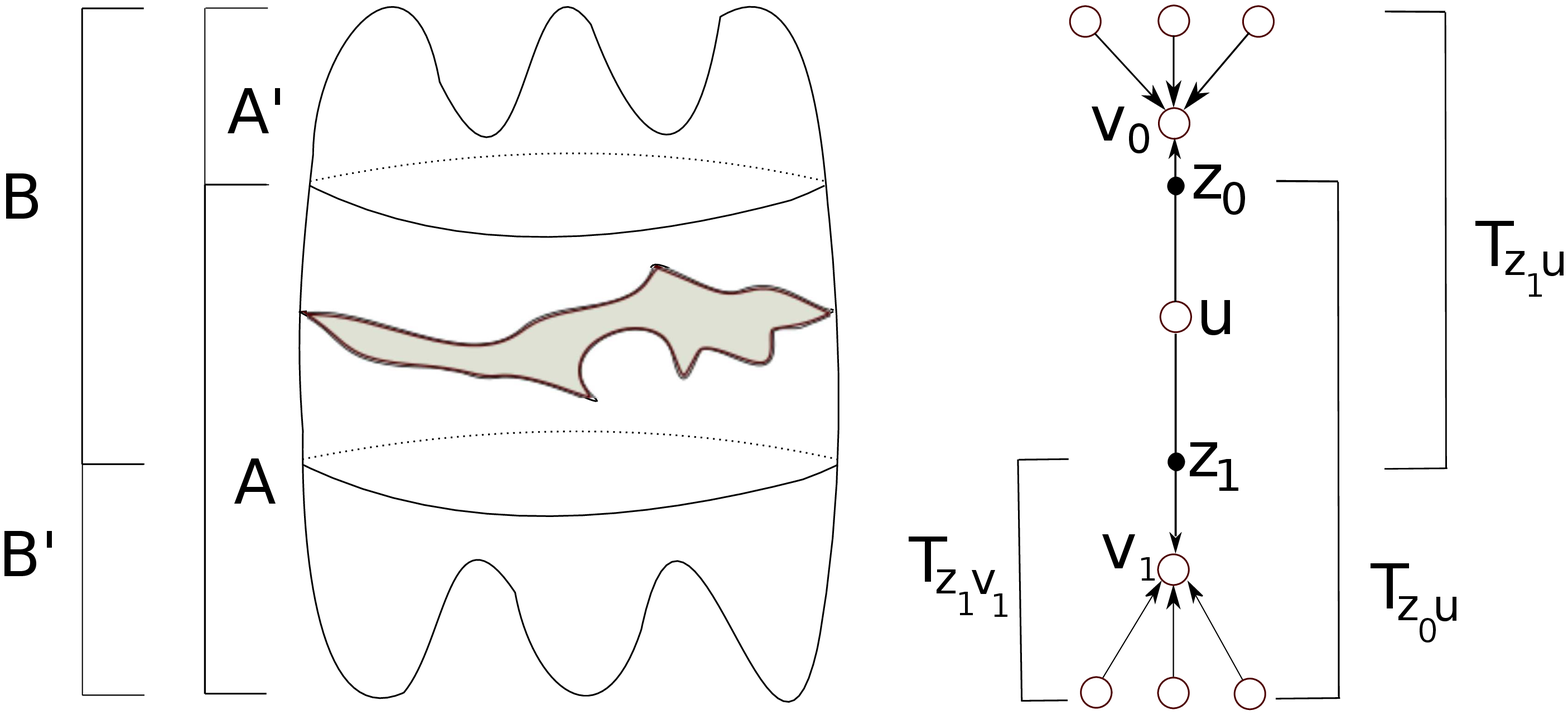}}
\caption{}
\protect
\label{fig:disks}
\end{figure}

Let $v$ be a vertex of $\Graphf$.
Notice that the complement $\Torus\setminus p_f^{-1}(v)$ is a union of $2$-disks if anf only if all edges incident to $v$ go into $v$, that is $v$ is a \emph{sink}.

Thus for the proof of Proposition~\ref{pr:unique_vertex} it suffices to prove that the oriented tree $\Graphf$ has a unique sink.
This statement is a direct consequence of the following lemma.
\begin{lemma}\label{lm:maximal_vertices}
Let $\Gamma$ be an oriented tree.
\begin{enumerate}
\item[(a)] If $\Gamma$ is finite, then it has maximal vertices.
\item[(b)] Suppose that for each vertex of $\Gamma$ there exists at most one edge going from $u$.
Then $\Gamma$ has at most one sink.
\end{enumerate}
\end{lemma}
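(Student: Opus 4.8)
The plan is to prove the two parts of Lemma~\ref{lm:maximal_vertices} by elementary combinatorics of finite trees, a ``maximal vertex'' here meaning a \emph{sink}, i.e. a vertex from which no edge issues.

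For part (a) I would introduce on the vertex set of $\Gamma$ the relation $u\preceq w$ meaning ``there is a directed path in $\Gamma$ from $u$ to $w$''. Transitivity is clear, and antisymmetry holds precisely because a tree has no cycles; hence $\preceq$ is a partial order on a finite nonempty set, so it has maximal elements. A vertex is $\preceq$-maximal exactly when no edge goes out of it, i.e. when it is a sink; this gives (a). Equivalently, and perhaps cleaner to write up: starting from any vertex, repeatedly follow an outgoing edge; since $\Gamma$ is finite and acyclic the walk cannot revisit a vertex and therefore must stop, and it can only stop at a sink.

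For part (b) I would argue by contradiction. Suppose $v\neq v'$ are both sinks, and let $v=w_0,w_1,\dots,w_k=v'$ with $k\ge 1$ be the unique simple path between them in the tree $\Gamma$; the argument is then a walk along this path keeping track of the direction of each edge $\{w_{i-1},w_i\}$. Since $w_0=v$ is a sink, the edge $\{w_0,w_1\}$ is directed from $w_1$ to $w_0$. Inductively, if $\{w_{i-1},w_i\}$ is directed from $w_i$ to $w_{i-1}$, then $w_i$ already has an outgoing edge, so by the hypothesis of (b) it has no other outgoing edge, forcing $\{w_i,w_{i+1}\}$ to be directed from $w_{i+1}$ to $w_i$. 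At $i=k$ this says $\{w_{k-1},w_k\}$ is directed out of $w_k=v'$, contradicting that $v'$ is a sink. Hence $\Gamma$ has at most one sink.

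I do not expect a real obstacle: the proof is short and formal. The only point requiring a little care is the bookkeeping of edge orientations along the path in part (b) — aligning the base case $w_0=v$ with the inductive step so that the contradiction lands exactly at $v'$ — and, in part (a), observing that acyclicity of $\Gamma$ is what upgrades the reachability relation from a preorder to a genuine partial order. Combining (a), (b) and Lemma~\ref{lemma2} shows that $\Graphf$ has exactly one sink, which by the remark preceding Lemma~\ref{lm:maximal_vertices} is the vertex $v$ with $\Torus\setminus p_f^{-1}(v)$ a disjoint union of open $2$-disks, thereby completing the proof of Proposition~\ref{pr:unique_vertex}.
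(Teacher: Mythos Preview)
Your proof is correct and follows essentially the same approach as the paper. For (a) your ``follow outgoing edges until stuck'' argument is exactly the paper's proof, and for (b) both you and the paper analyze edge orientations along the unique path between two putative sinks --- the paper phrases it as ``the direction must reverse somewhere, producing a vertex with two outgoing edges'', while your induction shows all edges point toward $v$, forcing $v'$ to have an outgoing edge; these are two ways of writing up the same idea.
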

\begin{proof}
(a) Suppose that $\Gamma$ has no sinks, so for each vertex $v$ there exist at least one edge going from $u$.
Let $v_0,{\ldots},v_{n-1},v_n$ be an arbitrary oriented path in $\Gamma$ consisting of mutually distinct vertices.
Since the edge $(v_{n-1}v_n)$ goes into $v_n$, it follows from Proposition~\ref{pr:unique_vertex} that there exists an edge $(v_nv_{n+1})$ going from $v_n$.
Notice that $v_{n+1}\neq v_i$, $i=0,\ldots,n$, otherwise $v_0,{\ldots},v_n,v_{n+1}$ would be a cycle in the tree $\Gamma$ which is impossible.
Therefore every oriented path in $\Gamma$ can be extended to a longer one which contradicts to a finiteness of $\Gamma$.
Hence $\Gamma$ must have sinks.

(b) Suppose that $\Gamma$ has two sinks $v_1$ and $v_2$ and let $\gamma: e_0,\ldots,e_k$ be a unique path connecting $v_1$ and $v_2$.
Since the edges $e_0$ and $e_k$ go into $v_1$ and $v_2$ respectively, it easily follows that for at least one vertex $u$ of $\gamma$ the edges incident to it goes from $u$, which is impossible due to the assumption, see Figure~\ref{fig:path_v1_v2}.
Hence $\Gamma$ has at most one sink.
\end{proof}

\begin{figure}[ht]
\center{\includegraphics[width=4cm]{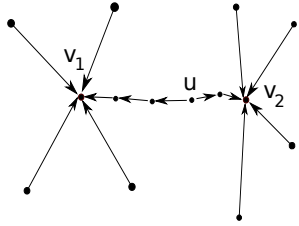} }
\caption{Path betwee $v_1$ and $v_2$}\label{fig:path_v1_v2}
\end{figure}

Now existence of a sink in $\Graphf$ follows from (a) of Lemma~\ref{lm:maximal_vertices} and its uniqueness from (b).
Proposition~\ref{pr:unique_vertex} is completed.

\section{Proof of Theorem~\ref{th:Graph_tree_Glocv_1}} 
Let $f\in\mathcal{F}(\Torus,\RRR)$ be such that $\Graphf$ is a tree, and $v$ be a unique maximal vertex of $\Graphf$ described in Proposition~\ref{pr:unique_vertex}.  
Suppose that $G_v^{loc} = 1$.
We should prove that the sequence
\begin{equation}\label{equ:exact_sequence_for_T2}
1 \longrightarrow \pi_1\DiffIdT \xrightarrow{~p_1~} \pi_1\Orbff 
\xrightarrow{~\partial_1~} \pi_0 \Stabfpr \longrightarrow 1
\end{equation}
splits.

Notice  that due to~\cite[Lemma~2.2]{Maksymenko:AGAG:2006} the image $p_1( \pi_1\DiffIdT )$ is contained in the center of the group $\pi_1\Orbff$.
Therefore for splitting of~\eqref{equ:exact_sequence_for_T2} it suffices to construct a section $s:\pi_0\Stabfpr \to \pi_1\Orbff$, that is a homomorphism such that $\partial_1\circ s = \id$.

First we recall the construction of the boundary homomorphism $\partial_1$.
Let $\omega_t$ be a loop in $\Orbff$, that is a continuous map $\omega:[0,1]\to\Orbff$ such that $\omega_0 = \omega_1$.
As $p:\DiffT\to \Orbf$ is a Serre fibration, $\omega$ can be lifted to a path in $\DiffT$.
In other words, there exists a continuous map $h:[0,1]\to\DiffT$ such that $\omega = p\circ h$, that is $\omega_t = p(h_t) = f\circ h_t$ for all $t\in[0,1]$.
Then by definition $\partial_1(\omega) = [h_1]$, where $[h_1]$ is the class of $h_1$ in $\pi_0\mathcal{S}'(f)$.

Thus if $h\in\mathcal{S}'(f)$ and $h:[0,1]\to\DiffT$ is a path such that $h_0 = \id$ and $h_1 = h$, then $\omega_t = f\circ h_t$ is a loop in $\Orbff$ such that $\partial_1(\omega) = h$.

Now Theorem~\ref{th:Graph_tree_Glocv_1} is a consequence of the following lemma.
\begin{lemma}\label{lm:conseq_Gloc_id}
Let $v$ be a unique sink of $\Graphf$, $(vu)$ be any open edge of $\Graphf$ incident to $v$, $z\in(vu)$ be a point, and $C = p_f^{-1}(z)$ be the corresponding simple closed curve on $\Torus$.
If the group $G_v^{loc}$ is trivial, then the following statements hold true.
\begin{enumerate}
\item[(i)]
Let $h\in\Stabfpr$.
Then $h(C)=C$ and there exists an isotopy $h_t:\Torus\to\Torus$, $t\in[0,1]$, such that
\begin{align}\label{equ:homotopy_h_id}
h_0 &=\id_{\Torus}, &
h_1 &= h, &
h_t(C) &= C, \ \forall t\in[0,1].
\end{align}
\item[(ii)]
If $\{h_t'\}$ is any other isotopy satisfying~\eqref{equ:homotopy_h_id}, then the paths $\{h_t\}$ and $\{h_t'\}$ are homotopic in $\DiffT$ relatively their ends.
In particular, the loops $\{f\circ h_t\}$ and $\{f\circ h_t'\}$ represent the same element of $\pi_1\Orbff$.
Denote that element by $s(h)$. 
\item[(iii)] The map $s:h\longmapsto s(h)$ is a homomorphism $s:\pi_0\Stabfpr\to\pi_1\Orbff$ such that $\partial_1\circ s = \mathrm{id}$, i.e. a section of $\partial_1$.
Hence the sequence~\eqref{equ:exact_sequence_for_T2} splits.
\end{enumerate}

\end{lemma}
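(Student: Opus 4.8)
The plan is to prove (i), (ii) and (iii) in turn, everything revolving around the homotopy type of the group $\mathcal{D}_C=\{g\in\DiffT\mid g(C)=C\}$ of diffeomorphisms of $\Torus$ preserving the circle $C$. By Proposition~\ref{pr:unique_vertex} and Lemma~\ref{lm:separating_curves}, $C$ bounds a $2$-disk $X_0$ on one side and a one-holed torus $X_1$ ($\chi(X_1)=-1$) on the other; as these are not homeomorphic, every $g\in\mathcal{D}_C$ automatically preserves $X_0$, and restriction to $C$ exhibits $\mathcal{D}_C$ (up to the usual collar-smoothing point) as the fibre product of $\mathcal{D}(X_0)$ and $\mathcal{D}(X_1)$ over $\mathcal{D}(C)$. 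I work with the orientation-preserving subgroups throughout, since $h\in\DiffIdT$.

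For the equality $h(C)=C$ in (i): $h\in\Stabf$ induces $\rho(h)\in G=\rho(\Stabfpr)\subset\Aut(\Graphf)$. The canonical orientation of the edges of $\Graphf$ introduced in the proof of Proposition~\ref{pr:unique_vertex} is determined by a topological property (which complementary subsurface is a $2$-disk), hence is preserved by $\rho(h)$; thus $\rho(h)$ carries the unique sink $v$ to itself and $\rho(h)\in G_v$. Since $G_v^{loc}=1$, $\rho(h)$ is the identity on a star $\st(v)$. A graph automorphism fixing an initial sub-arc of the edge $(vu)$ must map $(vu)$ to itself, and since by~\eqref{rho_h_preserves_pf} it commutes with $\phi$, which is strictly monotone along $(vu)$, it fixes $(vu)$ pointwise; in particular $\rho(h)(z)=z$, so $h(C)=h(p_f^{-1}(z))=p_f^{-1}(z)=C$, and similarly $\rho(h)$ preserves the component $T_{zu}$ of $\Graphf\setminus z$, whence $h(X_0)=X_0$.

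The existence of the isotopy in (i) and its uniqueness in (ii) both come from the homotopy exact sequence of the restriction fibration $\mathcal{D}_C\to\mathcal{D}(C)$, whose fibre over $\id_C$ is $\mathcal{D}(X_0,\partial X_0)\times\mathcal{D}(X_1,\partial X_1)$. Since $\mathcal{D}(D^2,\partial D^2)$ is contractible (Smale) and $\chi(X_1)<0$ forces the identity component $\mathcal{D}_{\id}(X_1,\partial X_1)$ to be contractible (\cite{Gramain:ASENS:1973}), the identity component of this fibre is contractible; as the identity component of $\mathcal{D}(C)$ is $\simeq S^1$, the identity component $\mathcal{D}_C^{0}$ of $\mathcal{D}_C$ fibres over $S^1$ with contractible fibre, so $\mathcal{D}_C^{0}\simeq S^1$. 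Two facts then follow. \emph{(a)} From the $\pi_0$-level of the sequence, $\pi_0\mathcal{D}_C$ is $\pi_0\mathcal{D}(X_1,\partial X_1)$ modulo the boundary Dehn twist $T_C$ of $X_1$, and the induced map $\pi_0\mathcal{D}_C\to\pi_0\DiffT$ is the capping homomorphism, which is injective; hence $h\in\DiffIdT$ together with $h(C)=C$ forces $[h]=1$ in $\pi_0\mathcal{D}_C$, i.e. $h\in\mathcal{D}_C^{0}$, and any path in $\mathcal{D}_C^{0}$ from $\id_\Torus$ to $h$ is an isotopy as in~\eqref{equ:homotopy_h_id}. \emph{(b)} The inclusion induces the \emph{zero} map $\pi_1\mathcal{D}_C^{0}\to\pi_1\DiffT$: the generator of $\pi_1\mathcal{D}_C^{0}\cong\ZZZ$ restricts on $C$ to the rotation loop, so its image under the evaluation $\DiffIdT\to\Torus$, $g\mapsto g(x_0)$ (for $x_0\in C$), is the loop $C$, which is null-homotopic in $\Torus$ since $C$ bounds $X_0$; as that evaluation is a $\pi_1$-isomorphism, the generator dies in $\pi_1\DiffT$. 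Consequently the concatenation of two isotopies as in~\eqref{equ:homotopy_h_id} is a loop in $\mathcal{D}_C^{0}$, hence null-homotopic in $\DiffT$, so the two isotopies are homotopic rel ends in $\DiffT$; applying the continuous map $p\colon g\mapsto f\circ g$ shows that $f\circ h_t$ and $f\circ h_t'$ are homotopic rel ends in $\Orbff$, which defines $s(h)$. I expect \emph{(a)} and \emph{(b)} to be the main obstacle — pinning down $\mathcal{D}_C^{0}$ via the capping homomorphism, and in particular checking that its \emph{non-trivial} $\pi_1$ nevertheless maps to $0$ in $\pi_1\DiffT$ because $C$ is inessential; everything else is graph combinatorics or diagram chasing.

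Part (iii) is then formal. If $g_u$ is a path in $\Stabfpr$ from $h$ to $h'$, then $g_u(C)=C$ by (i), so concatenating a $C$-preserving isotopy from $\id_\Torus$ to $h$ with $g_u$ gives one from $\id_\Torus$ to $h'$; since $f\circ g_u=f$ the appended portion of the orbit loop is constant, so $s(h')=s(h)$ and $s$ descends to $\pi_0\Stabfpr$. For multiplicativity, $h_t\circ h_t'$ is a $C$-preserving isotopy from $\id_\Torus$ to $hh'$, hence $s(hh')=[f\circ(h_t h_t')]$ by (ii), and the map $\Phi(t,s)=f\circ(h_t\circ h_s')=p(h_t\circ h_s')$ of the unit square into $\Orbff$ has bottom edge representing $s(h)$, right edge (using $f\circ h=f$) representing $s(h')$, and diagonal representing $s(hh')$, so $s(hh')=s(h)\,s(h')$ since the square is simply connected. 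Finally $\partial_1\circ s=\id$ because, by the description of $\partial_1$ recalled above, the loop $f\circ h_t$ lifts to the path $h_t$ ending at $h$. Thus $s$ is a section of $\partial_1$; combined with the centrality of $p_1(\pi_1\DiffIdT)$ in $\pi_1\Orbff$ (\cite[Lemma~2.2]{Maksymenko:AGAG:2006}), the sequence~\eqref{equ:exact_sequence_for_T2} splits as a direct product, which together with $\pi_1\DiffIdT\cong\ZZZ^2$ yields Theorem~\ref{th:Graph_tree_Glocv_1}.
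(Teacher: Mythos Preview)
Your overall strategy is sound and genuinely different from the paper's, but there is a computational error in the homotopy type of $\mathcal{D}_C^{0}$ that you should be aware of, even though it turns out to be harmless.

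\textbf{The error.} You claim that $\mathcal{D}_C^{0}$ fibres over $\mathcal{D}_{\id}(C)\simeq S^1$ with \emph{contractible} fibre, and conclude $\mathcal{D}_C^{0}\simeq S^1$. This is not right: the fibre of $\mathcal{D}_C^{0}\to\mathcal{D}_{\id}(C)$ over $\id_C$ is not the identity component of $\mathcal{D}(X_0,\partial X_0)\times\mathcal{D}(X_1,\partial X_1)$, but rather the intersection of that product with $\mathcal{D}_C^{0}$, which picks up all powers of the boundary twist $(\id,T_{\partial X_1}^{n})$. Concretely, the connecting map $\pi_1\mathcal{D}_{\id}(C)\cong\ZZZ\to\pi_0\bigl(\mathcal{D}(X_0,\partial)\times\mathcal{D}(X_1,\partial)\bigr)$ sends the rotation generator to $[(\id,T_{\partial X_1})]$, and since $T_{\partial X_1}$ has infinite order in $\mathrm{Mod}(X_1,\partial X_1)$ this map is injective. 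Feeding this into the long exact sequence, together with $\pi_1$ of the fibre being trivial, gives $\pi_1\mathcal{D}_C^{0}=0$; in fact $\mathcal{D}_C^{0}$ is contractible. So your reasoning that a generator of $\pi_1\mathcal{D}_C^{0}$ ``restricts on $C$ to the rotation loop'' is vacuous --- no such nontrivial loop exists. Fortunately this only strengthens your conclusion for (ii): two $C$-preserving isotopies from $\id$ to $h$ are already homotopic rel endpoints \emph{inside} $\mathcal{D}_C^{0}$, hence a fortiori in $\DiffT$. Your argument for (a), that $h\in\mathcal{D}_C^{0}$ via injectivity of the capping homomorphism $\mathrm{Mod}(X_1,\partial X_1)/\langle T_{\partial X_1}\rangle\hookrightarrow\mathrm{Mod}(\Torus)$, is correct and unaffected.

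\textbf{Comparison with the paper.} The paper takes a rather different route for (i): instead of analysing $\mathcal{D}_C$ by pure surface topology, it invokes an auxiliary lemma (proved using \cite[Thm.~7.1, Lemmas~6.4, 4.14]{Maksymenko:AGAG:2006}) to isotope $h$ \emph{inside $\Stabfpr$} to a diffeomorphism $g_1$ fixed on a whole neighbourhood $N=p_f^{-1}(\st(v))$, and then uses that $\Torus\setminus N$ is a union of disks to finish. So the paper's isotopy stays in the stabiliser for as long as possible, while yours leaves $\Stabfpr$ immediately and works in $\mathcal{D}_C$; your approach is more elementary (Smale, Gramain, capping) and avoids the specialised machinery of~\cite{Maksymenko:AGAG:2006}. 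For (ii) the two arguments are essentially the same idea --- the paper phrases it as ``a loop in $\DiffIdT$ is null-homotopic iff the track of a point is null-homotopic in $\Torus$'' and evaluates at a point of the bounded disk --- and for (iii) both are the expected formal check.
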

\begin{proof}
(i) 
We need the following lemma, see also~\cite{KudryavtsevaFomenko:DANRAN:2012}.
\begin{lemma}\label{lm:isotopy_to_id}
Let $M$ be a smooth compact surface, $f\in\mathcal{F}(M,\RRR)$, $\Graphf$ be the graph of $f$, $\rho:\Stabf \to \Aut(\Graphf)$ be the action homomorphism of $\Stabf$ on $\Graphf$, $v$ be any vertex of $\Graphf$, $\st(v)$ be any star of $v$ in $\Graphf$, and $N = p_f^{-1}(\st(v))$.
Let also $h\in\Stabfpr$ and $\rho(h): \Graphf \to \Graphf$ be the corresponding automorphism of $\Graphf$ induced by $h$.
Suppose that $\rho(h)(v)=v$ and $\rho(h)|_{\st(v)} = \id$.
Then there exists an isotopy $g_t:\Torus\to \Torus$, $t\in[0,1]$, such that 
 \begin{enumerate}
   \item $g_0 = h$;
   \item $g_t\in\Stabfpr$;
   \item $g_1$ is fixed on $N$;
   \item $\rho(h) = \rho(g_t) = \id$ for each $t\in[0,1]$.
  \end{enumerate}
In particular, $[h] = [g_t]\in\pi_0\Stabfpr$.
\end{lemma}
\begin{proof}
Let $V = p_f^{-1}(v)$ be the critical component of the critical level set of $f$ corresponding to $v$.
Then $V$ is a finite graph embedded into $M$ and it follows from $\rho(h)(v)=v$ that $h(V)=V$.
Since $h$ is isotopic to $\id_{\Torus}$ and trivially acts on $\st(v)$, it follows from~\cite[Theorem~7.1]{Maksymenko:AGAG:2006} that $h$ preserves each edge $e$ of $V$ and keeps its orientation.
Now existence of an isotopy satisfying (1)-(4) follows from~\cite[Lemmas~6.4 and 4.14]{Maksymenko:AGAG:2006}.
\end{proof}

Let us prove (i).
Not loosing generality one can assume that there are two stars $\st_1(v)$ and $\st(v)$ of $v$ such that $z\in \st_1(v) \subset \Int(\st(v))$, where $\Int(\st(v))$ is the interior of $\st(v)$ in $\Graphf$. 
Hence if we put $N_1 = p_f^{-1}(\st_1(v))$ and $N=p_f^{-1}(\st(v))$, then $N_1\subset \Int(N)$.

Let $h\in\Stabfpr$ and $g_t:\Torus\to \Torus$, $t\in[0,1]$, be an isotopy satisfying (1)-(4) of Lemma~\ref{lm:isotopy_to_id}.
Then it follows from (3) that $\rho(g_t)(z)=z$, whence $g_t(C) = C$ for all $t\in[0,1]$.
Since $g_1$ is fixed on $N$ and the complement $\Torus\setminus N_1$ consists only of 2-disks, we see that $g_1$ is isotopic to $\id_{\Torus}$ via an isotopy fixed on $N_1$ and therefore on $C$.

Hence $h$ is isotopic to $\id_{\Torus}$ via an isotopy leaving $C$ invariant.

\medskip 

(ii) 
Let us recall the following well known statement.
\begin{lemma}\label{lm:omega}
Let $\omega:\Torus\times[0,1]\to \Torus$ be a loop in $\mathcal{D}_{\mathrm{id}}(\Torus)$, that is an isotopy $\omega_0 = \omega_1 = \mathrm{id}_{\Torus}$.
Let also $q\in \Torus$ be a point and $\omega_q:\{q\}\times[0,1]\to \Torus$ be a loop in $\Torus$ given by the formula: $\omega_q(t) = \omega(q,t)$.
Then the loop $\omega$ is null-homotopic in $\mathcal{D}_{\mathrm{id}}(\Torus)$ if and only if $\omega_q$ is null-homotopic in $\Torus$.
\end{lemma}
\begin{proof}
Since $\Torus$ is a connected Lie group, it acts on itself by right shifts being diffeomorphisms.
This action induces an embedding $i:\Torus\hookrightarrow\mathcal{D}_{\mathrm{id}}(\Torus)$.
It is well known, \cite{EarleEells:DG:1970, Gramain:ASENS:1973}, that $i$ is a homotopy equivalence.
In particular, the induced homomorphism $i^*:\pi_1\Torus\to \pi_1\mathcal{D}_{\mathrm{id}}(\Torus)$ is an isomorphism.
This easily implies that $i^*([\omega_q]) = [\omega]$ and so $\omega$ is null homotopic in $\DiffT$ if and only if $\omega_q$ is null-homotopic in $\Torus$.
\end{proof}

Let $\alpha = \{h_t\}$ and $\beta = \{h_t'\}$ be any two paths satisfying~\eqref{equ:homotopy_h_id} and $D$ be a 2-disk in $\Torus$ bounded by $C$.
Consider the loop $\omega = \alpha\beta^{-1}$ in $\DiffIdT$.
Since $\omega(C\times t)=C$, $t\in[0,1]$, we obtain that $\omega(D\times t) = D$.
Therefore for each $q\in D$ the loop $\omega_q: \{q\}\times [0,1] \to \Torus$ is null-homotopic in $\Torus$.
Hence by Lemma~\ref{lm:omega} the loop $\omega$ is null-homotopic in $\DiffIdT$, that is $\alpha$ and $\beta$ are homotopic relatively their ends.

\medskip

(iii) Let $\{h_t\}$ and $\{h_t'\}$ be paths in $\Stabfpr$ satisfying~\eqref{equ:homotopy_h_id}.
Consider the following path
$$
g_t = \begin{cases}
       h_{2t}, & t\in[0,\tfrac{1}{2}],\\
       h\circ h_{2t-1}', & t\in[\tfrac{1}{2},1]
     \end{cases}
$$
in $\mathcal{D}_{\mathrm{id}}(\Torus)$ and the corresponding loop
$$
f\circ g_t = \begin{cases}
       f\circ h_{2t}, & t\in[0,\tfrac{1}{2}],\\
       f\circ h\circ h_{2t-1}' = f\circ h_{2t-1}' , & t\in[\tfrac{1}{2},1]
     \end{cases}
$$
in $\Orbff$.
Then by definition of the multiplication in $\pi_1\Orbff$ we have that
$$
[\{f\circ h_t\}]\cdot[\{f\circ h_t'\}] = [\{f\circ g_t\}].
$$
On the other hand, $g_1 = h\circ h'$ and $g_t(C) = C$ for all $t$, that is $[\{f\circ g_t\}] = s(h\circ h')$.
Hence $s(h)\circ s(h') = s(h\circ h')$.
Lemma~\ref{lm:conseq_Gloc_id} is completed.
\end{proof}

% \bibliographystyle{amsplain}
% \bibliography{biblio}

\def\cprime{$'$}
\providecommand{\bysame}{\leavevmode\hbox to3em{\hrulefill}\thinspace}
\providecommand{\MR}{\relax\ifhmode\unskip\space\fi MR }
% \MRhref is called by the amsart/book/proc definition of \MR.
\providecommand{\MRhref}[2]{%
  \href{http://www.ams.org/mathscinet-getitem?mr=#1}{#2}
}
\providecommand{\href}[2]{#2}

\end{document}